\newtheorem{theorem}{Theorem}[section]
\newtheorem{lemma}[theorem]{Lemma}
\newtheorem{corollary}[theorem]{Corollary}
\theoremstyle{definition}
\theoremstyle{remark}
\numberwithin{equation}{section}
\renewcommand{\epsilon}{\varepsilon}
\renewcommand{\phi}{\varphi}
\renewcommand{\kappa}{\varkappa}
\begin{document}

\title[Estimating symplectic capacities from\dots]{Estimating symplectic capacities from lengths of closed curves on the unit spheres}

\author{Arseniy~Akopyan{$^\spadesuit$}}
\email{akopjan@gmail.com}


\author{Roman~Karasev{$^\diamondsuit$}}
\email{r\_n\_karasev@mail.ru}
\urladdr{http://www.rkarasev.ru/en/}

\address{{$^\spadesuit$} Institute of Science and Technology Austria (IST Austria), Am Campus 1, 3400 Klosterneuburg, Austria}
\address{
{$^\diamondsuit$}Moscow Institute of Physics and Technology, Institutskiy per. 9, Dolgoprudny, Russia 141700}
\address{{$^\diamondsuit$}Institute for Information Transmission Problems RAS, Bolshoy Karetny per. 19, Moscow, Russia 127994}

\thanks{{$^\diamondsuit$} Supported by the Russian Foundation for Basic Research Grant 18-01-00036}

\subjclass[2010]{52A21,	52A38, 52A40, 53D99}
\keywords{Symplectic capacities, finite-dimensional normed spaces, length of closed curves}

\begin{abstract}
We improve the estimates for the Ekeland--Hofer--Zehnder capacity of convex bodies by Gluskin and Ostrover. In the course of our argument we show that a closed characteristic of minimal action on the boundary of a centrally symmetric convex body in $\mathbb R^{2n}$ must itself be centrally symmetric, with generalizations to some other types of symmetry.
\end{abstract}

\maketitle

\section{Introduction}

We are going to improve some results of the recent paper~\cite{gluskin2015} on estimating the symplectic capacities of convex bodies in $\mathbb R^{2n}$. The symplectic capacities are interesting invariants of open domains in the symplectic $\mathbb R^{2n}$, which are in most cases are not easy to estimate. Some particular results, including sharp estimates, can be found in \cite{aao2008,aao2012,aaok2013,abksh2014}.

Following the conventions in~\cite{gluskin2015}, we identify $\mathbb R^{2n} = \mathbb C^n$, that is, besides the symplectic structure $\omega = d\lambda$, we are going to use the complex structure $J : \mathbb C^n\to \mathbb C^n$.

The values to estimate are the Ekeland--Hofer--Zehnder capacity of a convex body $K\subset \mathbb C^n$, denoted by $c_{EHZ}(K)$. This capacity is known to coincide (for smooth convex bodies) with the smallest action
$$
A(\gamma) = \int_0^T \gamma^* \lambda\; dt
$$
of a \emph{closed characteristic} $\gamma : [0, T] \to \partial K$, $\gamma(0)=\gamma(T)$, that is a closed curve whose tangent vector belongs to the kernel of the restriction of $\omega$ to $\partial K$. 

The key idea of~\cite{gluskin2015} is to introduce another invariant, let us denote it by $c_J(K)$, which is the inverse of the norm of $J$ considered as a linear operator acting from the space $\mathbb C$ with the unit gauge $K^\circ$ (the polar of $K$) to the same space $\mathbb C$ with the unit gauge $K$. Equivalently,
\[
c_J(K)^{-1} = \max \{\omega(x,y) : x,y\in K^\circ\}.
\] 
Here we use the technical assumption that the origin is in the interior of $K$ in order to be able to work with its polar $K^\circ$; and we say \emph{gauge} to insist that the body $K$ need not be centrally symmetric, although the particular case of centrally symmetric body will be of interest to us. We denote the corresponding \emph{gauge function} (not necessarily symmetric norm) by $g_K(\cdot)$, which is the $1$-homogeneous function that is equal to $1$ precisely on $\partial K$. The value $c_J(K)$ is not a symplectic invariant (under arbitrary symplectomorphisms), but it is invariant under linear symplectomorphisms.

Proposition 2.2 of~\cite{gluskin2015} established the estimate
$$
c_{EHZ}(K) \ge c_J(K),
$$
and we are going to improve it in the following way:

\begin{theorem}
\label{theorem:ehz-estimate}
For arbitrary convex body $K\subset \mathbb C^n$,
$$
c_{EHZ}(K) \ge \left(1 + \frac{1}{2n}\right) c_J(K);
$$
and for centrally symmetric convex body $K\subset \mathbb C^n$
$$
c_{EHZ}(K) \ge \left(2 + \frac{1}{n}\right) c_J(K).
$$
\end{theorem}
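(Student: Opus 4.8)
The plan is to establish both inequalities through a single mechanism: taking the closed characteristic $\gamma$ of minimal action (which computes $c_{EHZ}(K)$) and pairing it against the optimal vectors $x, y \in K^\circ$ that realize $c_J(K)^{-1} = \omega(x,y)$. The starting observation in~\cite{gluskin2015} is that, for the minimal closed characteristic parametrized so that $g_K(\dot\gamma) \equiv 1$, one has $A(\gamma) = c_{EHZ}(K)$ and the period $T$ relates to the action. The key is that $\dot\gamma(t) = J \nabla g_K(\gamma(t))$ (up to normalization) forces the velocity to move in a controlled way relative to the polar body, and the linear functionals associated to $x$ and $y$ detect how much $\gamma$ must ``spread out'' to close up.

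First I would set up the pairing. Writing $h_x(t) = \langle x, \gamma(t)\rangle$ and $h_y(t) = \langle y, \gamma(t)\rangle$, I would differentiate to get $\dot h_x = \langle x, \dot\gamma\rangle$ and use the Hamiltonian relation together with the inequality $|\langle x, \dot\gamma\rangle| \le g_{K^\circ}(x) \cdot g_K(\dot\gamma) \le 1$ coming from $x \in K^\circ$ and the normalization $g_K(\dot\gamma)=1$. This bounds the oscillation of $h_x$ and $h_y$ over one period. The action $A(\gamma) = \frac{1}{2}\int_0^T \omega(\gamma, \dot\gamma)\,dt$ (after centering $\gamma$ so that its mean is the origin, which is legitimate since the action is translation-invariant in the appropriate sense) can then be compared against $T \cdot \omega(x,y) = T \cdot c_J(K)^{-1}$ through an isoperimetric-type estimate on the plane spanned by the directions $x, y$.

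The crux, and the reason the symmetric case gains the extra factor of $2$, is the symmetry result advertised in the abstract: a minimal closed characteristic on a centrally symmetric body must itself be centrally symmetric. I would invoke this to say $\gamma(t + T/2) = -\gamma(t)$, which doubles the effective variation of the functionals $h_x, h_y$ relative to their mean and hence doubles the winding contribution to the action. Concretely, the half-period symmetry means the curve $t \mapsto (h_x(t), h_y(t))$ in the plane encircles the origin with a forced amplitude, improving the isoperimetric bound by the factor governing the jump from $1 + \tfrac{1}{2n}$ to $2 + \tfrac{1}{n}$. In the general (non-symmetric) case one has no such half-period antipodality and must settle for the weaker $\tfrac{1}{2n}$ correction.

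The main obstacle I anticipate is making the isoperimetric/averaging step sharp enough to produce precisely the constants $1 + \tfrac{1}{2n}$ and $2 + \tfrac{1}{n}$ rather than some cruder multiplicative improvement. This almost certainly requires averaging the pairing not over a single fixed pair $(x,y)$ but over a suitable ensemble adapted to the dimension $2n$: one expects to integrate $\omega(\gamma, \dot\gamma)$ against the velocity distribution and extract the factor $\tfrac{1}{2n}$ from an averaging over the $n$ complex coordinate directions, exploiting that $\int_0^T g_K(\dot\gamma)\,dt = T$ partitions across dimensions. Getting the constant exactly right will hinge on a careful choice of test functionals and on the convexity of $K$ entering through a Wirtinger-type or Fourier expansion of $h_x, h_y$; the symmetry result from the abstract must be slotted in precisely at the point where the fundamental frequency of this expansion is isolated, since it kills the even harmonics and thereby sharpens the bound.
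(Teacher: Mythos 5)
Your proposal has a genuine gap, and it sits exactly where you yourself flagged it: the ``isoperimetric/averaging step'' that is supposed to produce the constants $1+\tfrac{1}{2n}$ and $2+\tfrac{1}{n}$ is never carried out, and it is not a technicality --- it is the entire content of the theorem. Worse, the mechanism you propose points in the wrong direction. All the constraints you derive on the projected curve $t\mapsto (h_x(t),h_y(t))$ are \emph{upper} bounds: $x,y\in K^\circ$ and $\gamma\subset\partial K$ give $h_x,h_y\le 1$, and $g_K(\dot\gamma)\equiv 1$ gives $|\dot h_x|,|\dot h_y|\le 1$; these say the projection is small and slow, which is useless for a lower bound on the action. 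Nothing in your setup forces the projection to have large amplitude: a closed characteristic need not come anywhere near the supporting hyperplanes of the extremal pair $(x,y)$, and central symmetry of $\gamma$ only makes the projection antipodally symmetric, not large. Moreover, if you try to recover $A(\gamma)$ from the symplectic plane $P$ spanned by $x,y$, the splitting $\mathbb R^{2n}=P\oplus P^{\omega}$ gives $A(\gamma)=A(\pi_P\gamma)+A(\pi_{P^{\omega}}\gamma)$, and the complementary term can be negative, so bounding one projection from below does not bound $A(\gamma)$ from below. The missing idea is the one fact that genuinely forces $\gamma$ to be spread out: integrating the characteristic equation $\dot\gamma=J\nabla g_K(\gamma)$ over a period gives $\oint\nabla g_K(\gamma)\,dt=0$, i.e.\ the outer normals along $\gamma$ combine non-negatively to zero, hence $\gamma$ \emph{cannot be translated into the interior of $K$}. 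That property involves all of $K$ at once; it cannot be detected by two linear functionals.

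Once that property is in hand, the paper's proof is short and involves no Fourier or Wirtinger analysis. First, $c_J(K)$ is used pointwise along the whole curve, not through a single extremal pair: since $\nabla g_K(\gamma)\in\partial K^\circ$, one has $g_K(\dot\gamma)=g_K(J\nabla g_K(\gamma))\le\|J\|_{K^\circ\to K}=1/c_J(K)$, and integrating gives $\int_0^T g_K(\dot\gamma)\,dt\le T/c_J(K)=2A(\gamma)/c_J(K)$. Second, the dimension-dependent constants come from purely convex-geometric length bounds for closed curves that cannot be translated into $\inte K$: in general $\int_0^T g_K(\dot\gamma)\,dt\ge 2+\tfrac{2}{d}$ by \cite[Theorem 4.1]{abksh2014} (going back to \cite{bb2009}), and for a centrally symmetric curve on the boundary of a centrally symmetric $K$ one has the stronger Schaffer bound $4+\tfrac{4}{d}$ of Lemma~\ref{lemma:schaffer}. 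With $d=2n$ these are $2+\tfrac{1}{n}$ and $4+\tfrac{2}{n}$, and dividing by $2$ gives exactly the two stated inequalities. Your instinct about where the symmetry result enters is correct: Lemma~\ref{lemma:symmetry} (or Corollary~\ref{corollary:symmetry}) is what entitles one to apply Schaffer's bound to the minimizing characteristic, and this is precisely the source of the jump from $1+\tfrac{1}{2n}$ to $2+\tfrac{1}{n}$. But it feeds a convexity length estimate, not a Fourier expansion, and without identifying the non-translatability property and these length bounds your outline cannot be completed as stated.
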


It seems that these improved bounds are still not tight and can be further improved, in particular, because the equality case in Lemma \ref{lemma:schaffer} below does not seems to correspond to a closed characteristic on the boundary of $K$ under a symplectic structure on $\mathbb R^d$.

\subsection*{Acknowledgments.} 
The authors thank Yaron Ostrover for his interest in the results and useful remarks.

\section{Length estimates for curves on the boundary}

In order to prove the main theorem we are going to use the approach of Bezdek and Bezdek \cite{bb2009} and estimate the lengths of closed curves $\gamma\in\mathbb R^d$ that cannot be translated into the interior of $K$. We measure their lengths with $g_K$ and note that in the symplectic case closed characteristics on the boundary of $K$ cannot be translated into the interior of $K$, this is explained in Section~\ref{section:proofs}.

In \cite{abksh2014} such estimates for a convex body $K\subset\mathbb R^n$ were used to produce lower bounds for the Ekeland--Hofer--Zehnder capacity of \emph{Lagrangian product} $K\times K^\circ\subset\mathbb R^{2n}$. This time we are going to use such estimates for any convex body $K$ in the symplectic $\mathbb R^{2n}$.

In the question about closed characteristics, compared to the Bezdek--Bezdek setup, we have an additional property of the curve $\gamma$: it is contained in the boundary of $K$. Now we want to check if this improves the lower bound on the $g_K$-length of such a curve, if it does not fit into the interior of a translate of $K$. We will use the general estimate $\ell(\gamma)\ge 2+2/d$ from \cite[Theorem 4.1]{abksh2014} for not necessarily symmetric $K$, although we believe that this estimate is not tight. And we are going to consider a symmetric $K$ separately and improve the bound $\ell(\gamma)\ge 4$ of \cite[Theorem 3.2]{abksh2014} in the symmetric case using the fact that the curve lies on the boundary.

It is well known (see \cite{reshetnyak1953anextremal}) that in dimension $d=2$ (when we speak about $\mathbb R^d$ without symplectic structure we just denote its dimension by $d$) the length of the boundary of the unit disk of a normed plane is at least $6$ in this norm; so the bound $4$ that we mentioned above is not optimal in this dimension. But in dimensions $d\ge 3$ we may consider the cube $C = [-1, 1]^d$ and a closed curve that is just an edge of $C$ going forth and back. This closed curve on the boundary has length $4$ in the norm $g_C(\cdot)$ and does not fit into the interior of a translate of $C$; so the bound $4$ cannot be improved without additional assumptions on the curve, which we are going to establish below in the case of action-minimizing closed characteristics.

\subsection{Action minimizers on the boundary of centrally symmetric convex bodies}

Of course, we have not used the symplectic structure in any way, but so far we see no evidence why a closed characteristic (under some symplectic structure of $\mathbb R^d=\mathbb R^{2n}$) on the boundary of a smooth convex body cannot behave like in the above example. We are going to analyze the behavior of a closed characteristic on the boundary of a centrally symmetric convex $K\subseteq\mathbb R^{2n}$ having the smallest action and giving the value of $c_{EHZ}(K)$. We will pass from convex bodies to norms $\|\cdot\|$ (as support functions of $K$ composed with $\omega$) and consider the classical (see~\cite{clarke1979}) variational problem for closed loops $\gamma : \mathbb R/\mathbb Z \to \mathbb R^{2n}$:

\begin{equation}
\label{equation:clarke}
\int_\gamma \|\dot\gamma\| \to \min, \quad \int_\gamma \lambda = 1,
\end{equation}
where $\lambda$ is a primitive of $\omega$. The minimum in this variational problem is the Ekeland--Hofer--Zehnder capacity $c_{EHZ}(K)$ of the convex domain $X$ up to constant, as shown in~\cite{clarke1979,ekeland1989}. Now we prove:

\begin{lemma}
\label{lemma:symmetry}
In the variational problem \eqref{equation:clarke}, for centrally symmetric $\|\cdot\|$, one of the minima is attained at a curve $\gamma$ centrally symmetric with respect to the origin. For smooth and strictly convex norms we can say more: All minima of \eqref{equation:clarke} are centrally symmetric with respect to some center.
\end{lemma}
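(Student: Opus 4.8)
The plan is to move from the length/area formulation \eqref{equation:clarke}, whose area constraint $\int_\gamma\lambda=1$ is \emph{quadratic} and hence defines a non-convex set of admissible loops, to the equivalent convex (dual) action principle of Clarke and Ekeland \cite{clarke1979,ekeland1989}, in which the action-minimizing closed characteristics appear as minimizers of a convex functional $\mathcal I$ over an \emph{affine} (in particular convex) set of loops. The whole difficulty of a naive symmetrization lives in the non-convexity of the area: if $\gamma$ minimizes \eqref{equation:clarke}, then so does $\iota\gamma$, where $(\iota\gamma)(t):=-\gamma\!\left(t+\tfrac12\right)$ combines the central symmetry $x\mapsto-x$ (allowed since $\|\cdot\|$ is symmetric) with the half-period time shift (which restores the sign of $\omega$). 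One checks directly that $\iota$ preserves both $\int_\gamma\|\dot\gamma\|$ and $\int_\gamma\lambda$ and that $\iota^2=\id$, and that the fixed loops of $\iota$ are exactly those with $\gamma(t+\tfrac12)=-\gamma(t)$, i.e.\ the centrally symmetric ones. However, the linear midpoint $\tfrac12(\gamma+\iota\gamma)(t)=\tfrac12\bigl(\gamma(t)-\gamma(t+\tfrac12)\bigr)$, though $\iota$-invariant, does \emph{not} keep $\int_\gamma\lambda=1$ precisely because $\int_\gamma\lambda$ is quadratic; this is exactly what forces the passage to the convex picture.

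In the dual formulation the same central symmetry induces an affine involution, still denoted $\iota$, on the convex domain of $\mathcal I$, with $\mathcal I\circ\iota=\mathcal I$ and with fixed points corresponding precisely to centrally symmetric characteristics. For the first assertion I would argue as follows. Let $u$ be any minimizer of $\mathcal I$. Then $\iota u$ is also a minimizer, and since the domain is convex the midpoint $\bar u:=\tfrac12(u+\iota u)$ again lies in it; by convexity of $\mathcal I$,
\[
\mathcal I(\bar u)\le\tfrac12\mathcal I(u)+\tfrac12\mathcal I(\iota u)=\min\mathcal I,
\]
so $\bar u$ is a minimizer as well. As $\iota\bar u=\tfrac12(\iota u+u)=\bar u$, the loop $\bar u$ is $\iota$-invariant and hence corresponds to a centrally symmetric action minimizer, producing one centrally symmetric minimum of \eqref{equation:clarke}.

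For the second assertion I would upgrade convexity to strict convexity. When $\|\cdot\|$ is smooth and strictly convex the integrand of $\mathcal I$ (a Legendre transform of the Hamiltonian $\tfrac12 g_K^2$) is strictly convex, so the displayed inequality is strict unless $u=\iota u$. Feeding $\mathcal I(\bar u)\ge\min\mathcal I$ back in then forces $u=\iota u$ for \emph{every} minimizer $u$, i.e.\ every action minimizer is centrally symmetric. Since \eqref{equation:clarke} is invariant under translations $\gamma\mapsto\gamma+c$ (indeed $\int_\gamma\lambda$ is translation invariant because $\int_0^1\dot\gamma\,dt=0$), the center of symmetry is only determined up to this translation freedom, which accounts for the phrase ``with respect to some center''.

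The main obstacle I expect is exactly the convexity (and, for the second part, strict convexity) of the dual action, together with a correct bookkeeping of its degenerate directions. The functional $\mathcal I$ carries an indefinite quadratic term (the symplectic area $\int\lambda$ written in the dual variable), so its convexity is a genuine input from \cite{clarke1979,ekeland1989} rather than something visible term by term. Moreover, even for the round ball, which is smooth and strictly convex, the minimizer is far from unique (the characteristics are the circles $\theta\mapsto e^{i\theta}v$ filling the sphere), so $\mathcal I$ cannot be strictly convex in \emph{all} directions; its flat directions are the linear-symplectic symmetries of $K$, and these happen to preserve central symmetry. The delicate point is therefore to check that the strict-convexity argument only needs strictness transverse to these symmetry directions, so that the conclusion $u=\iota u$ survives in the degenerate case, and that the translation freedom is the sole ambiguity in the choice of center.
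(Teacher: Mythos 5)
Your proposal correctly isolates the real difficulty (the quadratic, non-convex constraint $\int_\gamma\lambda=1$) and the involution $\iota\gamma(t)=-\gamma(t+\tfrac12)$ is exactly the right symmetry to exploit, but the step you rely on to overcome the difficulty does not exist: Clarke--Ekeland duality does \emph{not} turn \eqref{equation:clarke} into the minimization of a convex functional over a convex (or affine) set of loops. Clarke's dual problem has precisely the same structure as the primal one: one minimizes the convex functional $u\mapsto\int_0^1 H^*(-J\dot u)\,dt$ subject to the \emph{same} quadratic constraint $\int_0^1\tfrac12\langle J\dot u,u\rangle\,dt=1$; and the unconstrained dual action functional $\int_0^1\bigl[H^*(-J\dot u)-\tfrac12\langle J\dot u,u\rangle\bigr]\,dt$ contains the indefinite symplectic-area term (in Fourier modes this quadratic form has eigenvalues of both signs) and is therefore not convex. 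So the non-convexity you located in the primal problem reappears verbatim after dualization, and your midpoint $\bar u=\tfrac12(u+\iota u)$ in general violates the constraint (equivalently, fails to be a minimizer), exactly as $\tfrac12(\gamma+\iota\gamma)$ does in the primal picture. There is also a concrete obstruction to any framework of the kind you postulate: minimizers of a convex functional over a convex set form a convex set, whereas for the round ball (smooth, strictly convex, centrally symmetric) the dual problem can be solved by Fourier analysis and its minimizers are exactly the single-frequency circles $u_w(t)=e^{2\pi it}w$ with $|w|=R$ fixed (orientation conventions aside); the midpoint of $u_w$ and $u_{w'}$, $w\ne w'$, lies on a strictly smaller sphere and has strictly smaller area, so this solution set is not convex. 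Your closing caveat that $\mathcal I$ ``cannot be strictly convex in all directions'' thus understates the problem --- convexity fails outright --- and the claim that the flat directions are exactly the linear symplectic symmetries of $K$ and ``happen to preserve central symmetry'' is not something that can be extracted from \cite{clarke1979,ekeland1989}. In particular the second assertion of the lemma (all minimizers symmetric in the smooth strictly convex case), which you derive from strict convexity, is left without any foundation.

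For comparison, the paper proves the lemma by a direct symmetrization that works \emph{with} the quadratic constraint rather than trying to convexify it away: split the minimizer $\gamma$ into two arcs $\gamma_1,\gamma_2$ of equal $\|\cdot\|$-length, translate so that their common endpoints are $\pm x$, and close each arc by the straight segment between $x$ and $-x$. With the centrally symmetric primitive $\lambda=\sum_i p_i\,dq_i$ the actions of the two closed halves add up to $\int_\gamma\lambda=1$, so one of them, say the one containing $\gamma_1$, has action at least $\tfrac12$; the doubled curve $\gamma'=\gamma_1\cup(-\gamma_1)$ is then centrally symmetric, has the same length as $\gamma$, and has action at least $1$, and rescaling it to action exactly $1$ can only decrease the length (action scales quadratically, length linearly). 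This proves the first assertion, and the second follows because this symmetrization leaves the arc $\gamma_1$ in place: in the smooth, strictly convex case the minimizer solves an ODE, so uniqueness of solutions forces the symmetrized curve to coincide with the original one, which was therefore already symmetric. If you want to salvage your write-up, the involution $\iota$ and its fixed-point characterization can stay, but the passage to a ``convex dual'' must be replaced by an argument of this splitting type.
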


In~\cite[Corollary 2]{clarke1982} the existence of some symmetric closed characteristics was proved, but here we need more, that a minimal symmetric closed characteristic has no greater action than the minimal action of an arbitrary closed characteristic.

\begin{proof}
Assume $\gamma$ is the minimal curve, split it into two curves $\gamma_1$ and $\gamma_2$ of equal $\|\cdot\|$-lengths. We are going to use the particular primitive $\lambda = \sum_i p_idq_i$, which is invariant under the central symmetry of $\mathbb R^{2n}$.

Since the problem is invariant under translations of $\gamma$, we may translate it so that $\gamma_1$ passes from $-x$ to $x$ and $\gamma_2$ passes from $x$ to $-x$. Let $\sigma$ be the straight line segment from $x$ to $-x$ and $-\sigma$ be its opposite. Then the concatenations $\beta_1 = \gamma_1\cup\sigma$ and $\beta_2 = \gamma_2\cup (-\sigma)$ are the closed loops such that
\[
\int_{\beta_1} \lambda + \int_{\beta_2} \lambda = \int_{\gamma} \lambda = 1.
\]
Then without loss of generality we assume $\int_{\beta_1} \lambda \ge 1/2$. Then the centrally symmetric curve $\gamma' = \gamma_1\cup(-\gamma_1)$ has
\[
\int_{\gamma'} \lambda = \int_{\beta_1} \lambda + \int_{-\beta_1} \lambda \ge 1,
\]
and the length of $\gamma'$ is the same as the length of $\gamma$. Scaling $\gamma'$ to obtain $\gamma''$ with $\int_{\gamma''} \lambda = 1$ we will have centrally symmetric $\gamma''$ with length no greater than the length of the original $\gamma$.

In case the original body $K$ is smooth and strictly convex (and so is the norm $\|\cdot\|$) the proof shows that any minimal closed curve must be centrally symmetric. Indeed, we have presented a process that symmetrizes the minimal curve keeping a half of it the same. From the standard facts about the variational problems with everything smooth it follows that the minimum of the variational problem must be a smooth solution to a certain ODE, so the symmetrized curve must actually remain the same by the uniqueness of the solutions to ODEs. Therefore it must already be centrally symmetric.
\end{proof}

Using the correspondence \cite{clarke1979} between the optimization problem \eqref{equation:clarke} and the closed characteristics on the boundary of a smooth and strictly convex $K$ we may rephrase the previous lemma:

\begin{corollary}
\label{corollary:symmetry}
If $K\subset\mathbb R^{2n}$ is a smooth and strictly convex centrally symmetric body then any closed characteristic of minimal action on the boundary $\partial K$ is itself centrally symmetric.
\end{corollary}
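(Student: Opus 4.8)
The plan is to obtain the corollary as a direct translation of the smooth, strictly convex half of Lemma~\ref{lemma:symmetry} through the variational dictionary of~\cite{clarke1979}. First I would write that dictionary out explicitly. Associated to the smooth strictly convex $K$ is the norm $\|v\| = h_K(Jv)$ (the support function of $K$ precomposed with $\omega$, up to the sign and normalisation conventions of the paper), and Clarke's duality puts the action-minimizing closed characteristics on $\partial K$ in bijection with the minimizers of~\eqref{equation:clarke}. The passage from a minimizer $\gamma$ of~\eqref{equation:clarke} back to the characteristic $x(\cdot)\subset\partial K$ is by a fibrewise Legendre transform: $x(t)$ is the (unique, by strict convexity) point of $\partial K$ at which the maximum defining $\|\dot\gamma(t)\|=\max_{y\in K}\langle J\dot\gamma(t),y\rangle$ is attained. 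The one structural observation I need is that, because $K$ is centrally symmetric about the origin, the norm $\|\cdot\|$ is even and hence this Legendre map $v\mapsto x$ is odd, i.e. the maximizer for $-v$ is $-x$.

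Next I would feed in the strong conclusion of Lemma~\ref{lemma:symmetry}: in the smooth strictly convex case every minimizer $\gamma$ of~\eqref{equation:clarke} is centrally symmetric, $\gamma(t+\tfrac12)=2c-\gamma(t)$ for some center $c$. Differentiating kills the translation constant and gives the anti-periodicity $\dot\gamma(t+\tfrac12)=-\dot\gamma(t)$. Applying the odd Legendre map of the previous paragraph then yields $x(t+\tfrac12)=-x(t)$, so the associated closed characteristic is centrally symmetric, in fact about the origin. Since the dictionary is a bijection onto the action-minimizing characteristics, this proves that every such characteristic on $\partial K$ is centrally symmetric, as claimed.

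I expect the only real work to be bookkeeping about the correspondence, not new geometry. The crucial point to verify is that the duality of~\cite{clarke1979} genuinely intertwines the linear symplectic involution $x\mapsto -x$ (which preserves $\partial K$ and the Hamiltonian since $K=-K$) with the involution $\gamma\mapsto -\gamma$ on loops; concretely this is the oddness of the Legendre map, which is exactly where central symmetry of $K$ is used. A secondary check is that ``smooth and strictly convex'' are precisely the hypotheses under which Lemma~\ref{lemma:symmetry} upgrades from ``some minimizer is symmetric'' to ``every minimizer is symmetric'' through its ODE-uniqueness argument, so that the rephrasing indeed applies to \emph{every} action-minimizing characteristic rather than to just one of them.
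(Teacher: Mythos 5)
Your proposal is correct and follows essentially the same route as the paper: invoke the strong (smooth, strictly convex) half of Lemma~\ref{lemma:symmetry}, observe that Clarke's duality recovers the characteristic from the derivative $\dot\gamma$ (which is insensitive to the translation constant $c$), and use that this recovery map --- the gradient of the even function $\|\cdot\|^2$ composed with $J$ in the paper, your odd Legendre/maximizer map --- is odd precisely because $K=-K$. Your write-up merely makes explicit the oddness of the dictionary that the paper leaves implicit in its citation of the translation step on page 187 of Clarke's paper, so the two arguments are the same in substance.
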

\begin{proof}
In \cite{clarke1979} the problem \eqref{equation:clarke} was actually stated not for the length, but for the integral of $\|\dot\gamma\|^2$, this is needed to ensure an appropriate parameterization of $\gamma$, while the length version is invariant under reparameterizing $\gamma$ and therefore has some indeterminacy.

Lemma~\ref{lemma:symmetry} ensures that the optimal $\gamma$ is centrally symmetric after a translation; and we need to check that the corresponding closed characteristic in the boundary of $K$ is also centrally symmetric. In \cite[Page 187, 3rd line from the bottom]{clarke1979} the curve $\gamma$ is translated (note that the problem \eqref{equation:clarke} is translation-invariant), but after this translation $\gamma$ is expressed as the gradient of $\|\dot\gamma \|^2$ composed with the complex structure $J$. Since the derivative $\dot\gamma$ remains centrally symmetric under translations of $\gamma$, therefore the resulting translation of $\gamma$ in \cite[Page 187, 3rd line from the bottom]{clarke1979} is also centrally symmetric. The rest of the argument in \cite{clarke1979} shows that after such transformations $\gamma$ becomes a closed characteristic on the boundary of a homothet of $K$.
\end{proof}

In fact, it is possible to extend the results of Lemma~\ref{lemma:symmetry} and Corollary~\ref{corollary:symmetry} to other classes of symmetry:

\begin{theorem}
Let $K\subset\mathbb C^n$ be a smooth and strictly convex body such that $wK = K$ for the primitive $m$th root of unity $w\in\mathbb C$. Then any closed characteristic of minimal action on the boundary $\partial K$ is invariant under the multiplication by $w$.
\end{theorem}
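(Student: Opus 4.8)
The plan is to run the same variational comparison as in Lemma~\ref{lemma:symmetry}, with the $m$-fold rotation $z\mapsto wz$ playing the role of the central symmetry, and then to upgrade the resulting existence statement to a statement about every minimizer by the ODE-uniqueness argument of Corollary~\ref{corollary:symmetry}. The first point to settle is the choice of primitive. The form $\lambda=\sum_i p_i\,dq_i$ used for $m=2$ is invariant only under $-1$; for a general root of unity I would instead use the rotation-invariant primitive $\lambda_0=\tfrac12\sum_i(p_i\,dq_i-q_i\,dp_i)$, which satisfies $d\lambda_0=\omega$ and, written as $\lambda_0=\tfrac12\sum_i\mathrm{Im}(\bar z_i\,dz_i)$, is manifestly unchanged by $z\mapsto wz$ because $|w|=1$. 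Since $\lambda$ and $\lambda_0$ differ by an exact form they give the same action on closed curves, so \eqref{equation:clarke} is unaffected. As multiplication by $w$ is unitary it is a linear symplectomorphism that commutes with $J$, preserves $\lambda_0$, and (because $wK=K$) preserves the norm $\|\cdot\|$; hence it sends minimizers of \eqref{equation:clarke} to minimizers and closed characteristics on $\partial K$ to closed characteristics of the same action.

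Next I would try to produce one genuinely $w$-invariant minimizer. Given a minimal loop $\gamma$ of total $\|\cdot\|$-length $L$, suppose one can find a ``rotation chord'': parameters $t_0<t_1$ with $\gamma(t_1)=w\,\gamma(t_0)$. Then the $m$ rotated copies of the arc $\alpha=\gamma|_{[t_0,t_1]}$ concatenate into a closed loop $\gamma'=\bigcup_{j=0}^{m-1}w^j\alpha$, because the endpoint $w^j\gamma(t_1)=w^{j+1}\gamma(t_0)$ of the $j$-th piece is the initial point of the $(j{+}1)$-st, and $w^m=1$ closes it up. This $\gamma'$ is $w$-invariant by construction, has length $m\cdot(\text{length }\alpha)$, and action $m\int_\alpha\lambda_0$ by the $w$-invariance of $\lambda_0$, so its scale-invariant energy equals $m\,(\text{length }\alpha)^2/\int_\alpha\lambda_0$. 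The goal is to choose the chord so that this does not exceed $L^2/\int_\gamma\lambda_0$, which would make $\gamma'$ a $w$-invariant minimizer. In the case $m=2$ this is exactly the content of Lemma~\ref{lemma:symmetry}: after splitting $\gamma$ into two equal-length arcs one translates so that the two division points become antipodal, i.e.\ a $(-1)$-chord, and the two halves tile $\gamma'=\alpha\cup(-\alpha)$ with no extra segments.

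Here the main obstacle appears. For $m\ge 3$ the rotation $z\mapsto wz$ fixes only the origin, and a closed characteristic must remain on the fixed hypersurface $\partial K$, so there is no translation freedom to manufacture a rotation chord as for $m=2$; the existence of such a chord is equivalent to $\gamma\cap w^{-1}\gamma\ne\emptyset$, and two circles on $\partial K\cong S^{2n-1}$ need not meet once $2n-1\ge 3$. The robust part of the argument is the ODE reduction of Corollary~\ref{corollary:symmetry}: in the smooth strictly convex case $w\gamma$ is again a minimal closed characteristic, hence a leaf of the characteristic foliation solving the same first-order ODE as $\gamma$, so if $\gamma$ and $w\gamma$ share even one point they share a whole arc and thus coincide by uniqueness of solutions. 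Everything therefore comes down to exhibiting a single common point of $\gamma$ and $w\gamma$ — equivalently, a rotation chord on $\gamma$ — and I expect this to be the genuinely delicate step. Clarke's dual formulation, minimizing the strictly convex integrand $H^*(\dot u)$ over mean-zero loops subject to the action normalization, suggests the minimizer is essentially unique up to the $S^1$ reparametrization whenever $K$ carries no extra continuous symmetry, in which case $w\,u^*(t)=u^*(t+\theta)$ forces $w\gamma=\gamma$; and in the degenerate situations where the minimizers form a positive-dimensional family — the round ball being the model case, with its minimizers the Hopf circles — each individual minimizer is already $w$-invariant, so the conclusion persists. Making this dichotomy rigorous, or replacing it by a direct Borsuk–Ulam-type linking argument forcing a minimal characteristic to meet its $w$-rotate, is the part I would expect to require the most care.
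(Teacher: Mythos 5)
There is a genuine gap here, and it is the one you yourself flag: your outline (symmetrize inside Clarke's variational problem, then upgrade to every minimizer by ODE uniqueness) is exactly the paper's outline, and your $m=2$ discussion and the final uniqueness step are sound, but the core symmetrization step for $m\ge 3$ is left open, and the substitutes you sketch (essential uniqueness of the Clarke minimizer up to reparametrization, or a Borsuk--Ulam-type linking argument) are speculation rather than proof. Moreover, your diagnosis of the obstacle rests on a misconception: in the problem \eqref{equation:clarke} the competitor loops are \emph{not} constrained to lie on $\partial K$, and the functional is translation-invariant, so translation freedom is available for every $m$ (the return to $\partial K$ happens only at the very end, through the ODE). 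In fact, since $w-1$ is an invertible complex scalar, for \emph{any} pair $t_0,t_1$ there is a translation $\tau$ with $(\gamma+\tau)(t_1)=w\,(\gamma+\tau)(t_0)$; a ``rotation chord'' can always be manufactured. The real difficulty, which your proposal does not address, is the action bookkeeping: the action of an open arc is not translation-invariant, so one must show that for \emph{some} choice of arc the $w$-symmetrized loop still has action at least $1$ relative to its length.

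The paper resolves exactly this point. Split the minimizer $\gamma$ into $m$ arcs $\gamma_1,\dots,\gamma_m$ of equal $\|\cdot\|$-length, and let $v_i$ be the displacement vector of $\gamma_i$. For each $i$, concatenate translated copies of $\gamma_i, w\gamma_i,\dots,w^{m-1}\gamma_i$ head to tail; this closes up because the total displacement is $(1+w+\dots+w^{m-1})v_i=0$, is $w$-invariant after a translation, and has the same length as $\gamma$. Its action equals $mA_i+\alpha_m|v_i|^2$, where $A_i$ is the (translation-invariant) action of $\gamma_i$ closed up by $-v_i$, and $\alpha_m|v_i|^2$ is the area of the regular $m$-gon with sides $v_i,wv_i,\dots,w^{m-1}v_i$, which lies in a single complex line. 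If this competitor lost to $\gamma$ for every $i$, summing the $m$ strict inequalities $mA_i+\alpha_m|v_i|^2 < A+\sum_j A_j$ would give $\alpha_m\sum_i|v_i|^2<mA$, with $A$ the action of the polygon on $v_1,\dots,v_m$; but the calibration inequality (symplectic action bounded by Riemannian area of a filling $2$-chain), the planar isoperimetric inequality for $m$-gons after flattening that chain, and the quadratic--arithmetic mean inequality combine to give
\begin{equation*}
mA \;\le\; \frac{\alpha_m}{m}\Bigl(\sum_i |v_i|\Bigr)^2 \;\le\; \alpha_m\sum_i |v_i|^2,
\end{equation*}
a contradiction. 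Hence for some $i$ the $w$-invariant loop is also a minimizer; since it contains the arc $\gamma_i$ in its original position, your ODE-uniqueness step then forces it to coincide with $\gamma$, i.e.\ $\gamma$ itself is $w$-invariant. In short: the missing idea is not an intersection of $\gamma$ with $w\gamma$, but the translated-copies construction closed up by $1+w+\dots+w^{m-1}=0$ together with the averaged isoperimetric argument that at least one of the $m$ splittings works.
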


Compare this theorem with the result of \cite{clarke1982} showing (as a particular case) the existence of such closed geodesics without proving their minimality.

\begin{proof}
Like above we consider Clarke's optimization problem \eqref{equation:clarke} with the (not necessarily symmetric) norm $\|\cdot\|$ invariant under the multiplication by $w$. Let $\gamma$ be a solution to this problem. Split $\gamma$ into $m$ segments $\gamma_1,\ldots,\gamma_m$ of equal $\|\cdot\|$-length and let for any $i=1,\ldots,m$ the vector $v_i$ go from the beginning of $\gamma_i$ to its end. The action of $\gamma$ can be expressed as the sum
\[
A(\gamma) = \int_\gamma \lambda = A + A_1 + \dots + A_m,
\] 
where $A$ is the action of the polygon composed of the vectors $v_i$ and $A_i$ is the action of the loop obtained by concatenation of $\gamma_i$ and $-v_i$.

Now we try to replace $\gamma$ with the closed curve made by concatenation of appropriately translated $\gamma_i, w\gamma_i$, \dots, $w^{m-1}\gamma_i$. We indeed can arrange them into a closed loop because we have
\[
1 + w + \dots + w^{m-1}=0.
\]
This new loop, after an appropriate translation, will be invariant under the multiplication by $w$. The action of this new loop will be
\[
m A_i + S_i,
\]
where $S_i$ is the Riemannian area of the regular $m$-gon with side vectors $v_i, wv_i,\ldots, w^{m-1}v_i$ inscribed into this loop. This $m$-gon lies in a single complex line, which is a real two-dimensional subspace where the Riemannian area coincides with the symplectic action of its contour.

In fact, there is a constant (we do not need its precise value) $\alpha_m$ such that the area of a regular $m$-gon is expressed through its side length $\ell$ as $\alpha_m\ell^2$. Hence $S_i = \alpha_m |v_i|^2$, here we use the Euclidean norm, not $\|\cdot\|$. Now assume that this attempt to build a no worse $w$-invariant solution to Clarke's variational problem fails and we have
\[
m A_i + \alpha_m |v_i|^2 < A + \sum_i A_i.
\]
Moreover, assume this attempt fails for all $i=1,\ldots,m$. Then summing such inequalities we obtain
\begin{equation}
\label{equation:square-isoperimetry}
\alpha_m \sum_i |v_i|^2 < m A,
\end{equation}
and from the quadratic-arithmetic mean inequality
\[
\frac{\alpha_m}{m}\left( \sum_i |v_i| \right)^2 \le \alpha_m \sum_i |v_i|^2 < m A.
\]
Note that we can fill the polygonal loop composed of $v_1,\ldots, v_m$ by a $2$-chain $F$ of triangles $T_1,\ldots, T_{m-2}$ and note that by the calibration property \cite[Section I]{harvey-lawson1982} the Riemannian area of every such $T_i$ is no less than its symplectic action, hence for the area $S_F$ of the whole $2$-chain $F$ we have
\[
\frac{\alpha_m}{m}\left( \sum_i |v_i| \right)^2  < m S_F.
\]
This looks like the inverse isoperimetric inequality. Moreover, we can rotate $T_2,T_3,\ldots, T_{m-2}$ into the same $2$-plane one by one and see that this inequality is the opposite to the standard isoperimetric inequality for $m$-gons in the plane, thus obtaining a contradiction.

Therefore, for some $i$ the attempt to produce no worse solution with the required symmetry must succeed. From the existence and uniqueness theorem for the ODE for this variational problem we see that in the smooth case this modification of $\gamma$ must in fact give the same loop, just because the segment $\gamma_i$ remains at its place.

\textit{An alternative way to deduce \eqref{equation:square-isoperimetry}:} Note that the action of a loop is the sum of actions of its projections to the coordinate complex lines. At the same time the Euclidean square of a vector is the sum of the Euclidean squares of its projection to the same complex lines. Therefore it is sufficient to prove the opposite to \eqref{equation:square-isoperimetry} in the plane, where it again follows from the standard isoperimetry for $m$-gons.
\end{proof}

\subsection{Length estimates for centrally symmetric curves on the spheres}

In view of the previous symmetry results we need to investigate the length of centrally symmetric curves on the boundary of $K$ measured in the norm with unit ball $K\subset\mathbb R^d$. 

The following result is essentially \cite[Theorems 13E and 13F]{schaffer1976}:

\begin{lemma}[Schaffer, 1976]
\label{lemma:schaffer}
If $K\subset\mathbb R^d$ is convex and centrally symmetric and $\gamma\subset \partial K$ is a centrally symmetric closed curve then the length of $\gamma$ in the norm whose unit ball is $K$ is at least $4 + 4/d$. For odd $d$ the bound can be improved to $4+4/(d-1)$.
\end{lemma}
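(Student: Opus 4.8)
The plan is to exploit the central symmetry to halve the problem and then to extract the dimensional gain from the fact that $\gamma$ is pinned to the sphere. Write $\|\cdot\|$ for the gauge $g_K$. Using the symmetry, translate and split $\gamma$ so that it consists of a path $c$ from a point $p\in\partial K$ to $-p$ together with its reflection $-c$; then the full length is $2\,\mathrm{length}(c)$, and it suffices to prove $\mathrm{length}(c)\ge 2+2/d$ (respectively $2+2/(d-1)$ for odd $d$). First I would recover the elementary bound $\mathrm{length}(c)\ge 2$, which reproves $\mathrm{length}(\gamma)\ge 4$ from \cite{abksh2014}: if $\psi_0\in\partial K^\circ$ is a supporting functional at $p$, then $\psi_0\circ c$ is $1$-Lipschitz in the norm-arclength parameter and drops from $+1$ to $-1$. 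Parametrizing $c$ by norm-arclength on $[0,\ell]$, this is encapsulated in the identity
\[
\ell-2=\int_0^\ell\bigl(\|\dot c(s)\|-(-\psi_0)(\dot c(s))\bigr)\,ds,
\]
whose integrand is pointwise nonnegative because $-\psi_0\in\partial K^\circ$.

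The whole difficulty is to show that this defect integral is at least $2/d$. I would first observe that no \emph{fixed} functional (nor any average of fixed functionals) can beat the bound $2$: for fixed $\phi$ the analogous defect equals $\ell+2\phi(p)$, whose minimum over $\phi\in\partial K^\circ$ is exactly $\ell-2$, so it merely re-encodes the endpoint condition. Hence the gain must come from the \emph{moving} supporting functional $\psi_s$ at $c(s)$, i.e.\ genuinely from the constraint $c(s)\in\partial K$. Two facts drive the estimate: at the start the curve leaves $p$ tangentially, so $\psi_0(\dot c(0))=0$ and the defect is maximal (near $1$) there; and along the way $\psi_s(\dot c(s))=0$ while the functionals $\psi_s$ themselves sweep from $\psi_0$ to $-\psi_0$ inside $\partial K^\circ$. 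Quantifying how much norm-length such a turning must consume in $\mathbb R^d$ is where the factor $2/d$ should appear, and I would aim to prove the sharp defect inequality by an isoperimetric/averaging argument over these supporting functionals.

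To confirm that $4+4/d$ is the correct constant I would track the extremal configuration, generalizing the planar case: every symmetric convex body in the plane inscribes an affine-regular hexagon whose $g_K$-self-perimeter is exactly $6=4+4/2$, the three pairs of vertices behaving like the vertices of a triangle. In $\mathbb R^d$ with $d$ even the analogue is a centrally symmetric polytope on the $2(d+1)$ points $\pm u_0,\dots,\pm u_d$ modelled on a regular simplex, traversed as a symmetric closed polygon with edges of common $g_K$-length $2/d$; this gives perimeter $2(d+1)\cdot\tfrac2d=4+4/d$. For odd $d$ such a regular symmetric configuration cannot be placed in general position, and the optimal one degenerates into a hyperplane, i.e.\ into the even-dimensional configuration of dimension $d-1$, producing the improved bound $4+4/(d-1)$. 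This matches Schaffer's two cases \cite[Theorems 13E, 13F]{schaffer1976} and explains the parity.

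The main obstacle is exactly the sharp defect inequality with its dimension- and parity-dependent constant: passing from the qualitative $\mathrm{length}(c)>2$ (which already follows for strictly convex $K$, since the integrand above vanishes only when $\dot c\parallel -p$, forcing the forbidden straight chord through the interior) to the quantitative $\ge 2/d$. A secondary technical point is that for a general $\gamma$ which is not the whole boundary one must pass to $C=\conv\gamma$ and ensure the extremal vertices are visited in the correct cyclic order along $\gamma$, so that the inscribed-polygon perimeter genuinely lower-bounds $\mathrm{length}(\gamma)$. I expect the turning/averaging estimate to be the part demanding real work, with the ordering issue handled by approximation and convexity.
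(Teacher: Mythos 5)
Your setup is sound as far as it goes: splitting the symmetric curve into a half $c$ running from $p$ to $-p$, the defect identity $\ell-2=\int_0^\ell\bigl(\|\dot c(s)\|-(-\psi_0)(\dot c(s))\bigr)\,ds$, the observation that no fixed functional (or average of fixed functionals) can yield more than $2$, and the strict-convexity argument that $\ell>2$ are all correct. But these steps only recover the bound $4$ already available from \cite[Theorem 3.2]{abksh2014}; the entire content of Lemma~\ref{lemma:schaffer} is the dimension-dependent gain $4/d$ (respectively $4/(d-1)$ for odd $d$), and that is exactly the step you do not prove. Your ``sharp defect inequality'' is announced as a goal, with an appeal to an unspecified isoperimetric/averaging argument over the moving supporting functionals $\psi_s$: no candidate inequality is formulated, no averaging measure is chosen, no mechanism producing the factor $1/d$ is exhibited, and the parity phenomenon (why odd $d$ gives $2/(d-1)$ rather than $2/d$) is explained only at the level of the conjectured extremal example, not at the level of the estimate. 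What you have is a correct reduction plus a research plan; the hard step is precisely the one left open.

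For comparison: the paper itself does not prove this lower bound either --- it quotes it from Schaffer \cite[Theorems 13E and 13F]{schaffer1976} --- and the only proof supplied in-house concerns tightness, via an explicit symmetric closed polygon on the boundary of a cube. Your description of the extremal configuration ($2(d+1)$ vertices $\pm u_i$ traversed with edges of common $g_K$-length $2/d$, degenerating into a hyperplane when the ambient dimension is odd) is consistent with that construction, but exhibiting the extremal curve establishes only that the constant cannot be improved, not the inequality itself. To turn your proposal into a proof you would need either to actually carry out the quantitative estimate on how much $g_K$-length the family $\{\psi_s\}\subset\partial K^\circ$ must spend sweeping from $\psi_0$ to $-\psi_0$, or to reproduce Schaffer's original argument; as written, the lemma remains unproven.
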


The example in \cite[Theorem 10E]{schaffer1976} shows that this bound is tight for even $d$, in the case if interest to us.

\section{Proof of Theorem~\ref{theorem:ehz-estimate}}
\label{section:proofs}

Now we turn to the estimates of the Ekeland--Hofer--Zehnder capacity. Following~\cite{gluskin2015} we consider a closed characteristic $\gamma : [0, T] \to \partial K$ as given by the equation:
\begin{equation}
\label{equation:characteristic}
\dot\gamma = J\nabla g_K(\gamma);
\end{equation}
in this case the action $A(\gamma)$ equals $T/2$. Integrating the equation (\ref{equation:characteristic}) we see that the gradients $\nabla g_K$ (that is the outer normals to $\partial K$) along the curve $\gamma$ non-negatively combine to give zero. This implies (see~\cite{abksh2014} or \cite{aaok2013}) that the curve $\gamma$ cannot be translated to the interior of $K$. 

Now in the not necessarily symmetric case we just invoke the estimates~\cite[Theorem 4.1]{abksh2014} (going back to~\cite{bb2009}) for such closed curves that cannot be covered by a smaller homothets of $K$:
$$
\int_0^T g_K(\dot\gamma)\; dt \ge 2+\frac{1}{n}.
$$

In the centrally symmetric case we have
$$
\int_0^T g_K(\dot\gamma)\; dt \ge 4 + \frac{2}{n}
$$
in view of Lemma \ref{lemma:schaffer}, because in the case of centrally symmetric $K$ we have a centrally symmetric curve by Lemma \ref{lemma:symmetry} (or Corollary~\ref{corollary:symmetry}).

We conclude the proof as in~\cite{gluskin2015}:
$$
\int_0^T g_K(\dot\gamma)\; dt \le \int_0^T g_K(J\nabla g_K(\gamma))\; dt \le \int_0^T \|J\|_{K^\circ\to K}\; dt = \frac{T}{c_J(K)} = \frac{2 A(\gamma)}{c_J(K)},
$$
which gives the required estimates.

\bibliography{../Bib/karasev}

\begin{thebibliography}{10}

\bibitem{abksh2014}
A.~Akopyan, A.~Balitskiy, R.~Karasev, and A.~Sharipova.
\newblock Elementary approach to closed billiard trajectories in asymmetric
  normed spaces.
\newblock {\em Proceedings of the American Mathematical Society},
  144(10):4501--4513, 2016.
\newblock \href{http://arxiv.org/abs/1401.0442}{arXiv:1401.0442}.

\bibitem{aaok2013}
S.~Artstein-Avidan, R.~Karasev, and Y.~Ostrover.
\newblock From symplectic measurements to the {M}ahler conjecture.
\newblock {\em Duke Mathematical Journal}, 163(11):2003--2022, 2014.
\newblock \href{http://arxiv.org/abs/1303.4197}{arXiv:1303.4197}.

\bibitem{aao2008}
S.~Artstein-Avidan and Y.~Ostrover.
\newblock A {B}runn--{M}inkowski inequality for symplectic capacities of convex
  domains.
\newblock {\em International Mathematics Research Notices}, 2008.
\newblock \href{http://arxiv.org/abs/0712.2631}{arXiv:0712.2631}.

\bibitem{aao2012}
S.~Artstein-Avidan and Y.~Ostrover.
\newblock Bounds for {M}inkowski billiard trajectories in convex bodies.
\newblock {\em International Mathematics Research Notices}, 2012.
\newblock \href{http://arxiv.org/abs/1111.2353}{arXiv:1111.2353}.

\bibitem{bb2009}
D.~Bezdek and K.~Bezdek.
\newblock Shortest billiard trajectories.
\newblock {\em Geometriae Dedicata}, 141:197--206, 2009.

\bibitem{clarke1979}
F.~Clarke.
\newblock A classical variational principle for periodic {H}amiltonian
  trajectories.
\newblock {\em Proceedings of the American Mathematical Society}, 76:186--188,
  1979.

\bibitem{clarke1982}
F.~Clarke.
\newblock On {H}amiltonian flows and symplectic transformations.
\newblock {\em SIAM Journal of Control and Optimization}, 20(3):355--359, 1982.

\bibitem{ekeland1989}
I.~Ekeland and H.~Hofer.
\newblock Symplectic topology and {H}amiltonian dynamics.
\newblock {\em Mathematische Zeitschrift}, 200(3):355--378, 1989.

\bibitem{gluskin2015}
E.~D. Gluskin and Y.~Ostrover.
\newblock Asymptotic equivalence of symplectic capacities.
\newblock 2015.
\newblock \href{http://arxiv.org/abs/1509.01797}{arXiv:1509.01797}.

\bibitem{harvey-lawson1982}
R.~Harvey and H.~B. Lawson~Jr.
\newblock Calibrated geometries.
\newblock {\em Acta Mathematica}, 148(1):47--157, 1982.

\bibitem{reshetnyak1953anextremal}
Y.~G. Reshetnyak.
\newblock An extremal problem from the theory of convex curves.
\newblock {\em Uspehi Matem. Nauk (N.S.)}, 8(6(58)):125--126, 1953.

\bibitem{schaffer1976}
J.~J. Schaffer.
\newblock {\em Geometry of Spheres in Normed Spaces}.
\newblock Lecture Notes in Pure and Applied Mathematics. Marcel Dekker Inc,
  1976.

\end{thebibliography}
\bibliographystyle{abbrv}
\end{document}